  \theoremstyle{plain}
  \newtheorem*{thm*}{\protect\theoremname}
\theoremstyle{plain}
\newtheorem{thm}{\protect\theoremname}
  \theoremstyle{plain}
  \newtheorem{lem}[thm]{\protect\lemmaname}
\gdef\SetFigFontNFSS#1#2#3#4#5{} 
\gdef\SetFigFont#1#2#3#4#5{} 
\def\clap#1{\hbox to 0pt{\hss#1\hss}}
\DeclareMathOperator{\len}{len}
\DeclareMathOperator{\saw}{saw}
\DeclareMathOperator{\rw}{rw}
\newcommand{\Bcal}{\mathscr{B}}
\newcommand{\Ccal}{\mathscr{C}}
\newcommand{\Lcal}{\mathscr{L}}
\newcommand{\Gcal}{\mathscr{G}}
\definecolor{myblue}{rgb}{0.09,0.32,0.44} 
\theoremstyle{definition}
\newtheorem*{qst*}{Question}
\newtheorem*{rmrks*}{Remarks}
\newtheorem*{rem*}{Remark}
\newlength{\tempindent} 
\newcommand{\lazyenum}{
\setlength{\tempindent}{\parindent} 
\begin{enumerate}[leftmargin=0cm,itemindent=0.7cm,labelwidth=\itemindent,labelsep=0cm,align=left,label=\arabic*)]
\setlength{\parskip}{\smallskipamount}
\setlength{\parindent}{\tempindent}
}
  \providecommand{\lemmaname}{Lemma}
  \providecommand{\theoremname}{Theorem}
\providecommand{\theoremname}{Theorem}
\newcommand{\Z}{{\mathbb Z}}
\newcommand{\R}{{\mathbb R}}
\newcommand{\eqan}[1]{\begin{align}#1\end{align}}
\newcommand{\eqn}[1]{\begin{equation}#1\end{equation}}
\newcommand{\nn}{\nonumber}
\newcommand{\sss}{\scriptscriptstyle}
\def\1{{\mathchoice {1\mskip-4mu\mathrm l}      
{1\mskip-4mu\mathrm l}
{1\mskip-4.5mu\mathrm l} {1\mskip-5mu\mathrm l}}}
\newcommand{\e}{{\mathrm{e}}}
\begin{document}

\title{Lace expansion for dummies}
\author{Erwin Bolthausen}
\address{Institut f\"ur Mathematik, Universit\"at Z\"urich, Winterthurerstrasse 190, 8057 Z\"urich, Switzerland
and Research Institute for Mathematical Sciences, Kyoto University, Kyoto 606-8502, Japan.
e-mail: \tt{eb@math.uzh.ch}}

\author{Remco van der Hofstad}
\address{Department of Mathematics and
    Computer Science, Eindhoven University of Technology, P.O.\ Box 513,
    5600 MB Eindhoven, The Netherlands.
e-mail: \tt{r.w.v.d.hofstad@tue.nl}}

\author{Gady Kozma}

\address{
Weizmann Institute, Rehovot, 76100, Israel. e-mail: \tt{gady.kozma@weizmann.ac.il}
}



\begin{abstract}
We show Green's function asymptotic upper bound for the two-point function of weakly self-avoiding walk in $d>4$, revisiting a classic problem. Our proof relies on Banach algebras to analyse the lace-expansion fixed point equation and is simpler than previous approaches in that it avoids Fourier transforms.
\end{abstract}

\maketitle

\section{Introduction}

The lace expansion made its debut in 1985 with a proof by Brydges and Spencer that weakly self-avoiding
walk (WSAW) has ``Gaussian behaviour'' in dimensions $5$ and above \cite{BS85}.
It proved to be useful way beyond its initial application, primarily
in work by Hara and Slade. The technique was applied to 
percolation \cite{HS90}, lattice trees and animals \cite{HS90b}, the contact process \cite{S01}, the Ising model \cite{S07} and $\varphi^4$ \cite{S15}. Further, it was extended to finite
graphs \cite{BCHSS05} and to long-range models \cite{CS15}. Despite
all this progress, weakly self-avoiding walk remains the simplest
example to which the technique applies: lace expansion is a ``perturbative''
technique and it requires a small parameter.
Weakly self-avoiding walk has such a small parameter naturally
built-in, while for most models, the small parameter is more hidden. Consequently, it was used as a test bed for several new
techniques, for example in \cite{HHS98}, where the lace expansion was 
analysed using induction in time,  and \cite{BR01} where a Banach fixed point theorem was used. 
Interestingly, neither of these papers uses the so-called bootstrap analysis
introduced in \cite{S87}. In our opinion, the bootstrap analysis is
the most important simplification to lace expansion, replacing the 
difficult ``moving single pole'' analysis of \cite{BS85}. The bootstrap 
analysis applies to generating functions such as the WSAW 
Green's function, while \cite{BR01,HHS98} 
prove results for WSAW with a fixed number of steps instead.
Green's function asymptotics in $x$-space as derived here were proved previously in technically more challenging settings 
in \cite{HHS03} for spread-out models and in \cite{H08} for nearest-neighbour settings. Brydges and Spencer \cite{BS85} prove 
Gaussian limit laws for the end-to-end displacement for WSAW after $n$ steps.

Our starting point was also an attempt to generalise lace expansion,
rather than to simplify it. We wished to apply it to problems on Cayley
graphs of non-commutative groups. Most of the existing approaches rely
heavily on the Fourier transform, which is of course no longer available
in this new setting. The approach of \cite{BR01,ABR13}, though, turned
out to be applicable. We realized that it can be simplified and generalised
by working in an appropriate Banach algebra. 


In this paper, we expose our Banach-algebra approach in the simplest
possible setting: weakly self-avoiding walk on $\mathbb{Z}^{d}$,
with the result being an upper bound on the critical Green's function.
We repeat that related results have been proved previously, our proof is novel.

\section{Precise definitions and statement of the theorem}

For a nearest-neighbour path $\gamma:\{0,\dotsc,n\}\to\mathbb{Z}^{d}$ and a $\beta\in[0,1]$,
we define its weight by
	\begin{equation}
	W(\gamma)=W^{\beta}(\gamma)=(1-\beta)^{|\{0\le s<t\le n:\gamma(s)=\gamma(t)\}|},
	\label{eq:defW}
	\end{equation}
i.e., the path is ``penalized'' by $1-\beta$ for every self-intersection of $\gamma$. We define the weakly self-avoiding walk Green's function to be
	\[
	G_{\lambda}^{\saw}(x)=G_{\lambda}^{\beta,\saw}(x)
	=\sum_{\gamma \colon 0\to x}\lambda^{\len(\gamma)}W^{\beta}(\gamma),
	\]
where the notation $\gamma\colon 0\to x$ means that $\gamma$ is some path
starting at $0$ and ending at $x$, while $\len(\gamma)$ is the number
of edges of $\gamma$ (rather than vertices). We define $\lambda_{c}$
to be the critical value for the finiteness of the spatial sum of $G_{\lambda}^{\saw}$, i.e.,
	\[
	\lambda_{c}=\sup\Big\{\lambda\colon \sum_{x\in\mathbb{Z}^{d}}G_{\lambda}^{\saw}(x)<\infty\Big\}.
	\]
Finally denote by $G^{\rw}(x)$ the (critical) Green's function of
simple random walk (SRW) on $\mathbb{Z}^{d}$, i.e.,
	\[
	G^{\rw}(x)=\sum_{n=0}^{\infty}p_{n}(x)
	\]
where $p_{n}(x)$ is the probability that simple random walk on $\mathbb{Z}^{d}$
starting from $0$ is at $x$ at time $n$. When $d>2$ the sum converges
and $G^{\rw}(x)=(a+o(1))|x|^{2-d}$ as $|x|\to\infty$ with $a>0$. See e.g.\ \cite{U98}. The result 
is that the WSAW  Green's function is bounded by the SRW Green's function for $d>4$:

\begin{thm*}[Green's function upper bound]

Let $d>4$. Then there exists a $\beta_{0}$ such that for all $\beta<\beta_{0}$,
$\beta$-weakly self avoiding walk satisfies
	\[
	G_{\lambda_{c}}^{\saw}(x)\le 2G^{\rw}(x)\qquad\forall x\in\mathbb{Z}^{d}.
	\]
\end{thm*}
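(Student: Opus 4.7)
The plan is to apply the lace expansion to get the fixed point equation
\[
G_\lambda^{\saw}(x) = \delta_{0,x} + \sum_y J_\lambda(y)\,G_\lambda^{\saw}(x-y), \qquad J_\lambda = (1-\beta)\lambda D + \Pi_\lambda,
\]
where $D(x) = \indic{|x|=1}$ and $\Pi_\lambda$ is the lace expansion coefficient admitting standard diagrammatic upper bounds in terms of convolutions of $G_\lambda^{\saw}$. For comparison, the simple random walk Green's function satisfies the analogous equation $G^{\rw} = \delta + P * G^{\rw}$ with $P = D/(2d)$, so the whole task is to show that the ``perturbation'' $J_\lambda - P$ is small enough at criticality to only distort the solution by at most a factor of two.

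Next, I would recast everything inside a Banach algebra $(\Bcal, \|\cdot\|_\Bcal)$ of functions on $\Z^d$ under convolution, tailored so that convolution is submultiplicative, $\|f*g\|_\Bcal \le \|f\|_\Bcal \|g\|_\Bcal$, and so that control in $\Bcal$ translates into pointwise control of the form $|f(x)| \le C\|f\|_\Bcal\, G^{\rw}(x)$. The restriction $d>4$ enters through the triangle condition $\sup_x (G^{\rw} * G^{\rw})(x)/G^{\rw}(x) < \infty$, which is what makes such a $\Bcal$ possible without Fourier analysis; the triangle condition here plays the role that the ``infrared bound'' plays in the classical Slade bootstrap.

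The core is then a bootstrap in the coupling $\lambda \in [0,\lambda_c)$. I would define a scalar $T(\lambda)$ measuring how much $G_\lambda^{\saw}$ exceeds $G^{\rw}$, for example $T(\lambda) = \sup_x G_\lambda^{\saw}(x)/G^{\rw}(x)$, possibly supplemented by an auxiliary $\Bcal$-norm quantity. Then $T(0) = 1$ trivially, and $T$ is continuous in $\lambda$ on $[0,\lambda_c)$ by monotone convergence on the walk-length series. The improvement step, ``$T(\lambda)\le 2 \Rightarrow T(\lambda)\le 1 + C\beta$,'' feeds the assumed pointwise bound into the diagrammatic estimates for $\Pi_\lambda$ to conclude that $\Pi_\lambda$ is small (of order $\beta$) in $\Bcal$, after which a Neumann-series comparison of the two fixed point equations inside $\Bcal$ yields the improved bound. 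For $\beta < \beta_0$ small enough, the usual open/closed argument then forces $T(\lambda) \le 1 + C\beta < 2$ throughout $[0,\lambda_c)$, and letting $\lambda \uparrow \lambda_c$ together with Fatou's lemma gives the stated bound at criticality.

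The main obstacle is the design of $\Bcal$: it must be rich enough to accommodate the non-integrable decay $G^{\rw}(x) \sim |x|^{2-d}$ while still closing as a convolution algebra with a clean submultiplicative inequality and still providing pointwise comparison with $G^{\rw}$. A secondary subtlety is the a priori finiteness of $T(\lambda)$ needed to start the bootstrap on all of $[0,\lambda_c)$; this is typically handled via an auxiliary argument on finite-step truncations of $G_\lambda^{\saw}$, which converge monotonically to the Green's function. Once $\Bcal$ is in place, the diagrammatic estimates on $\Pi_\lambda$ and the Neumann-series improvement are standard lace-expansion technology.
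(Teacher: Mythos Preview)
Your overall architecture --- lace expansion, a bootstrap in $\lambda$ on the ratio $T(\lambda)=\sup_x G_\lambda^{\saw}(x)/G^{\rw}(x)$, and a Banach-algebra improvement step --- matches the paper's. But the specific Banach algebra you propose does not exist, and the ``triangle condition'' you invoke is false: for $d>4$ one has $(G^{\rw}*G^{\rw})(x)\asymp |x|^{4-d}$ while $G^{\rw}(x)\asymp |x|^{2-d}$, so $\sup_x (G^{\rw}*G^{\rw})(x)/G^{\rw}(x)=\infty$. Any norm satisfying your desideratum $|f(x)|\le C\|f\|_{\Bcal}\,G^{\rw}(x)$ would, for the same reason, fail to be submultiplicative under convolution. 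So the plan, as written, cannot close: you correctly flagged the design of $\Bcal$ as the main obstacle, but your suggested resolution does not work.

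The paper's way around this is to take the weight $|x|^{-d}$ rather than $|x|^{2-d}$, i.e.\ $\|f\|=\max\big\{\sum_x|f(x)|,\ \sup_x |f(x)|\,|x|^d\big\}$, which \emph{is} a convolution algebra (up to a constant). The price is that $G^{\rw}$ itself is not in $\Bcal$, so one cannot run a Neumann series directly on the equation $G=\delta+J*G$. Instead the paper works with the convolution inverse $\Delta^{\saw}$ (so $G^{\saw}*\Delta^{\saw}=\delta_0$), chooses $\mu$ so that $\rho:=\Delta^{\saw}-\Delta_\mu^{\rw}$ has zero sum, and proves the key estimate $\|\rho*G_\mu^{\rw}\|\le C$. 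This is the genuinely nontrivial step: since $|\rho(x)|\le C\beta|x|^{-d-4}$ and $G^{\rw}(x)\asymp|x|^{2-d}$, naively $\rho*G^{\rw}$ looks no better than $|x|^{2-d}$; the required $|x|^{-d}$ decay comes from a two-term Edgeworth expansion of $G^{\rw}$ together with the symmetry and zero-mean of $\rho$, which kill the zeroth-, first-, and second-order Taylor terms. Once $\|\Delta^{\saw}*G_\mu^{\rw}-\delta_0\|=\|\rho*G_\mu^{\rw}\|\le C\beta$ is established, one inverts $\Delta^{\saw}*G_\mu^{\rw}$ in $\Bcal$ and sets $G=(\Delta^{\saw}*G_\mu^{\rw})^{-1}*G_\mu^{\rw}$; a final elementary splitting (not the algebra norm alone) recovers the pointwise bound $|G(x)|\le 2G^{\rw}(x)$. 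Your Neumann-series picture is morally right, but it has to be run on $\Delta^{\saw}*G_\mu^{\rw}$ near $\delta_0$, not on $G^{\saw}$ near $G^{\rw}$.
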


\begin{rem*}
We will also show a lower bound,
$G_{\lambda_c}^{\saw}\ge\frac{1}{2}G^{\rw}$, and further that
	\eqn{
	\label{Green-asymp}
	G_{\lambda_{c}}^{\saw}(x)=(1+O(\beta))G^{\rw}(x)\qquad\textrm{as
          $\beta\to 0$, uniformly in $x\in\mathbb{Z}^{d}$}.
	}
See the remarks on page \pageref{page:rmrks}. We find the upper
bound to be the more interesting and we prefer to focus on it.
\end{rem*}

The remainder of this paper is devoted to the proof of this theorem.

\section{Proof}
\label{sec-pf}
For $\mu\in \R$ we denote by $\Delta^{\rw}_\mu$ the following function
	\eqn{
	\label{srw-delta}
	\Delta_{\mu}^{\rw}(x)=\begin{cases}
	1 & x=0,\\
	-\mu & x\mbox{ is a neighbour of }0,\\
	0 & \mbox{otherwise}.
	\end{cases}
	}
We say that a function $f\colon \mathbb{Z}^{d}\to\mathbb{R}$ is ``symmetric
to coordinate permutations and flipping'' if for any $\sigma\in S_{d}$
(the group of permutations on $d$ elements) and for any $\epsilon_{1},\dotsc,\epsilon_{d}\in\{\pm1\}$,
	\[
	f(x_{1},\dotsc,x_{d})=f(\epsilon_{1}x_{\sigma(1)},\dotsc,\epsilon_{d}x_{\sigma(d)}).
	\]
We further write $f*g$ for the convolution of two functions $f,g\colon \Z^d \mapsto {\mathbb R}$ and $\delta_0$ for Kronecker's delta function. 

\begin{lem}[Lace expansion analysis]
\label{lem:lace}Let $d>4$. Then there exists a $\beta_{0}$ such
that for all $\beta<\beta_{0}$ and for all $\lambda<\lambda_{c}$
the following holds. If $G_{\lambda}^{\saw}(x)\le3G^{\rw}(x)$ for
all $x\in\mathbb{Z}^{d}$, then there exists a function $\Delta_{\lambda}^{\saw}\colon \mathbb{Z}^{d}\to\mathbb{R}$
such that $G_{\lambda}^{\saw}*\Delta_{\lambda}^{\saw}=\delta_{0}$
and such that 
\begin{enumerate}
\item $\Delta_{\lambda}^{\saw}$ is symmetric to coordinate permutations
and flipping;
\item $\sum_{x}\Delta_{\lambda}^{\saw}(x)\ge0;$
\item There exists some $\lambda'\in[0,\frac{1}{2d}]$ such that 
	\[
	|\Delta_{\lambda}^{\saw}(x)-\Delta_{\lambda'}^{\rw}(x)|\le C\beta|x|^{-d-4}.
	\]
\end{enumerate}
\end{lem}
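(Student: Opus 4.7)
The plan is to perform the Brydges--Spencer lace expansion on $G_\lambda^{\saw}$ and read off $\Delta_\lambda^{\saw}$ from the resulting convolution identity. The lace expansion yields
\[
G_\lambda^{\saw} = \delta_0 + (J_\lambda + \Pi_\lambda) * G_\lambda^{\saw},
\]
where $J_\lambda(y) := \lambda\,\1_{y \sim 0}$ is the single-step kernel and $\Pi_\lambda$ is the lace-expansion coefficient, obtained by classifying self-intersections of walks via the inclusion--exclusion expansion of $(1-\beta)^{|\{0\le s<t\le n:\gamma(s)=\gamma(t)\}|}$ over ``laces''. Setting $\Delta_\lambda^{\saw} := \delta_0 - J_\lambda - \Pi_\lambda$, the identity $G_\lambda^{\saw} * \Delta_\lambda^{\saw} = \delta_0$ holds by construction.

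Conclusions (1) and (2) then come for free. Property (1) follows because $J_\lambda$ is symmetric by definition and $\Pi_\lambda$ inherits the coordinate-permutation and flipping symmetries from the WSAW weight $W^\beta$. Property (2) follows from summing the convolution identity: since $\lambda<\lambda_c$, the sum $S:=\sum_x G_\lambda^{\saw}(x)$ is finite and strictly positive, and $S\cdot\sum_x \Delta_\lambda^{\saw}(x)=1$ forces $\sum_x\Delta_\lambda^{\saw}(x)=1/S>0$.

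Conclusion (3) is the technical heart. It reduces to a pointwise diagrammatic bound $|\Pi_\lambda(x)|\le C\beta|x|^{-d-4}$ for $|x|\ge 2$, together with $|\Pi_\lambda(0)|,|\Pi_\lambda(e_1)|=O(\beta)$, where $e_1$ is any nearest neighbour of $0$. The factor $\beta$ is built in because each term of the lace expansion requires at least one explicit self-intersection, worth $\beta$ when $(1-\beta)^{I(\gamma)}$ is expanded. The spatial decay is obtained by bounding each lace diagram by iterated convolutions of $G_\lambda^{\saw}$, replacing those by $G^{\rw}$ via the hypothesis $G_\lambda^{\saw}\le 3G^{\rw}$, and invoking standard random-walk convolution estimates in $d>4$ such as $(G^{\rw})^{*2}(x)=O(|x|^{4-d})$; the sharp $|x|^{-d-4}$ power requires careful tracking of how many internal convolutions each lace carries. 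The resummation over lace orders is where the Banach-algebra framework announced in the introduction takes over from the classical Fourier-space bootstrap: one works in a function space with weight matching the desired decay and proves convergence as a geometric series in~$\beta$.

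With the bound on $\Pi_\lambda$ in hand, I would set $\lambda':=\max\{0,\min\{\tfrac{1}{2d},\,\lambda+\Pi_\lambda(e_1)\}\}$. By symmetry $\Pi_\lambda$ takes a common value on all nearest neighbours of $0$, so at $|x|=1$ the difference $\Delta_\lambda^{\saw}(x)-\Delta_{\lambda'}^{\rw}(x)$ either vanishes (when the clipping is inactive) or is $O(\beta)$, using the perturbative estimate $\lambda_c-1/(2d)=O(\beta)$ implied by the hypothesis; for $|x|\ge 2$ the difference equals $-\Pi_\lambda(x)$ and is controlled by the diagrammatic bound; at $x=0$ the bound $|x|^{-d-4}$ is vacuous. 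The principal obstacle in this plan is obtaining the sharp pointwise decay $|x|^{-d-4}$ of $\Pi_\lambda$ (rather than merely an $\ell^1$ or Fourier bound), and this is exactly where the Banach-algebra machinery of the paper must do the heavy lifting.
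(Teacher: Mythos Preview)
Your overall architecture is correct and matches the paper: derive the lace-expansion recursion, set $\Delta_\lambda^{\saw}=\delta_0-J_\lambda-\Pi_\lambda=\Delta_\lambda^{\rw}-\Pi_\lambda$, and read off properties (1) and (2) exactly as you do. Property (3) is indeed the content, and it does reduce to a pointwise bound on $\Pi_\lambda$.

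However, you have misidentified where the novelty of the paper lies, and in doing so you leave the actual argument for (3) as a black box. The Banach-algebra machinery is \emph{not} used in this lemma at all; the paper is explicit that Lemma~\ref{lem:lace} ``essentially relies on the same lace expansion argument as performed by Brydges and Spencer'' and that the Banach algebra enters only in Lemma~\ref{lem:solve} (the deconvolution step). The bound on $\Pi_\lambda$ is obtained by the classical diagrammatic route: decompose $\Pi=\sum_N(-1)^N\Pi^{(N)}$ according to the number of lace edges, bound $|\Pi^{(N)}(x)|\le(C\beta)^N A^{(N)}(x)$ where $A^{(N)}$ is a chain of convolutions of $|x|^{2-d}$ factors dictated by the lace diagram, and then prove inductively that $A^{(N)}(x)\le C^N|x|^{6-3d}$ via the elementary estimate
\[
\sum_{w}|w|^{4-2d}|w-u|^{2-d}|w-v|^{2-d}\le C|u|^{2-d}|v|^{2-d}
\]
(Cauchy--Schwarz plus a dyadic split). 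So the decay you get is $|x|^{6-3d}$, which for $d\ge 5$ is automatically $\le |x|^{-d-4}$; no ``careful tracking'' or special framework is needed for the stated exponent. The geometric summation in $N$ is then just $\sum_N (C\beta)^N$, with $\beta_0$ chosen to make this converge.

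A minor point: your choice of $\lambda'$ is more elaborate than necessary. The paper simply takes $\lambda'=\min(\lambda,\tfrac1{2d})$; the fact that this lies in $[0,\tfrac1{2d}]$ is trivial, and the required bound at nearest neighbours follows because $1-2d\lambda\ge -C\beta$ (a consequence of $\sum_x\Delta^{\saw}(x)\ge 0$ and the $\Pi$-bound), so $|\lambda-\lambda'|\le C\beta$.
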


A somewhat abusive convention we adopt here and below is that $|x|^{-\alpha}=1$
when $x=0$, so condition (3) in fact implies that $|\Delta^{\saw}(0)-\Delta^{\rw}(0)|\le C\beta$.
$C$ and $c$ are used for constants that depend only on the dimension.
 Let us remark that in fact we simply take $\lambda'=\min(\lambda,\frac 1{2d})$, though we will not use this fact. Another remark worth making is that in (3) we will in fact prove, $|\Delta^{\saw}(x)-\Delta^{\rw}(x)|\le C\beta|x|^{6-3d}$ which is of course stronger than the stated estimate when $d\ge 5$. However, it will be convenient to formulate the lemma as above.

We remark that it is tempting to think about $\Delta^{\saw}$ as a
generator of some random walk (with killing), but it is missing one
important property of a generator: it is not true that $\Delta^{\saw}(x)<0$
for all $x\ne0$. This means that a lot of deconvolution techniques 
for random walk generators are inapplicable. The next lemma is the
required deconvolution:

\begin{lem}[Deconvolution]
\label{lem:solve}Let $d>2$. Then there exists $\beta_{0}$ such
that for all $\beta<\beta_{0}$ and for any $\Delta\colon \mathbb{Z}^{d}\to\mathbb{R}$
satisfying conditions (1)-(3) of Lemma \ref{lem:lace}, there exists
a function $G$ such that $G*\Delta=\delta_{0}$ and $|G(x)|\le 2G^{\rw}(x)$.
\end{lem}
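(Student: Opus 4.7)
The plan is to invert $\Delta$ inside the Banach algebra $(\ell^{1}(\mathbb{Z}^{d}),*)$ via a Neumann series relative to the random walk generator $\Delta^{\rw}_{\lambda'}$. Writing $E=\Delta-\Delta^{\rw}_{\lambda'}$ and $H:=G^{\rw}_{\lambda'}*E$, where $G^{\rw}_{\lambda'}$ is the Green's function of $\Delta^{\rw}_{\lambda'}$, the formal candidate is $G:=G^{\rw}_{\lambda'}*(\delta_{0}+H)^{-1}$, since then $G*\Delta=(\delta_{0}+H)^{-1}*(\delta_{0}+H)=\delta_{0}$. The challenge is that $G^{\rw}_{\lambda'}\notin\ell^{1}$ at the critical value $\lambda'=1/(2d)$, so whenever $\sum_{x}E(x)\neq 0$ the error $H$ inherits a non-$\ell^{1}$ multiple of $G^{\rw}_{\lambda'}$ and the Neumann series cannot be set up in $\ell^{1}$.

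I would first reduce to $\sum_{x}E(x)=0$ by shifting $\lambda'\mapsto\lambda''=\lambda'-(2d)^{-1}\sum_{x}E(x)$. Condition~(2) gives $\sum_{x}\Delta(x)\ge 0$, hence $\sum_{x}E(x)\ge-(1-2d\lambda')$, so $\lambda''\in[0,1/(2d)]$; and since $|\lambda'-\lambda''|=O(\beta)$ and $\Delta^{\rw}_{\lambda'}-\Delta^{\rw}_{\lambda''}$ is supported on the nearest neighbours of $0$, condition~(3) survives with a slightly larger constant.

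The main technical step is the bound $|H(x)|\le C\beta(1+|x|)^{-d-1}$, so in particular $\|H\|_{1}\le C\beta$. Starting from $H(x)=\sum_{y}[G^{\rw}_{\lambda'}(x-y)-G^{\rw}_{\lambda'}(x)]E(y)$ (using $\sum E=0$), I would split the sum at $|y|=|x|/2$: the large-$y$ part is dispatched using the $|y|^{-d-4}$ decay of $E$ together with $G^{\rw}_{\lambda'}\le G^{\rw}$. For small $y$, Taylor expand $G^{\rw}$ around $x$ to second order; the linear term vanishes by the coordinate-permutation and flipping symmetry of $E$, and the quadratic term is a constant $c=\sum_{y}y_{1}^{2}E(y)=O(\beta)$ times the trace of $\nabla^{2}G^{\rw}(x)$, i.e.\ the continuous Laplacian of the smooth asymptotic $c_{d}|x|^{2-d}$ of $G^{\rw}$, which vanishes off $0$ up to an $O(|x|^{-d-2})$ correction coming from the $O(|x|^{-d})$ error in the asymptotic. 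The third-order Taylor remainder, paired with the convergent moment $\sum_{y}|y|^{3}|E(y)|\le C\beta$, then produces the claimed $|x|^{-d-1}$ bound. This is the step I expect to be the main obstacle: without the zero-sum reduction and the symmetry-plus-Laplacian cancellations, one only gets $|H(x)|\lesssim\beta G^{\rw}(x)$, and $H\notin\ell^{1}$ for $d>2$.

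Once $\|H\|_{1}\le C\beta<1/2$, the inverse $F:=(\delta_{0}+H)^{-1}=\sum_{k\ge 0}(-H)^{*k}$ exists in $\ell^{1}$ with $\|F-\delta_{0}\|_{1}\le 2C\beta$, and a routine induction (split each convolution at $|y|=|x|/2$, $\ell^{1}$ on one factor and pointwise decay on the other) transfers the $(1+|x|)^{-d-1}$ bound to every $H^{*k}$ with constants summable in $k$, giving $|(F-\delta_{0})(x)|\le C\beta(1+|x|)^{-d-1}$. For $G=G^{\rw}_{\lambda'}+G^{\rw}_{\lambda'}*(F-\delta_{0})$, the same kind of split yields the convolution estimate ``$G^{\rw}$ against a function decaying faster than $|x|^{-d}$ is at most a constant times $G^{\rw}$'', so $|G^{\rw}_{\lambda'}*(F-\delta_{0})(x)|\le C\beta G^{\rw}(x)$. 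Combined with $G^{\rw}_{\lambda'}\le G^{\rw}$, this gives $|G(x)|\le(1+C\beta)G^{\rw}(x)\le 2G^{\rw}(x)$ for $\beta<\beta_{0}$ small enough.
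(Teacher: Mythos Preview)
Your strategy is essentially the paper's: shift the random-walk parameter so that $E=\Delta-\Delta^{\rw}_{\lambda''}$ has zero sum, bound $H=G^{\rw}_{\lambda''}*E$ via an Edgeworth/Taylor expansion exploiting the symmetry and harmonicity cancellations, invert $\delta_0+H=G^{\rw}_{\lambda''}*\Delta$ by a Neumann series, and finish with one more splitting for $G^{\rw}_{\lambda''}*(F-\delta_0)$. The paper packages the $\ell^1$ bound and the pointwise $|x|^{-d}$ decay into a single Banach-algebra norm $\|f\|=\max\bigl\{\|f\|_1,\sup_x|f(x)|\,|x|^d\bigr\}$, which makes the iteration over $H^{*k}$ automatic, but that difference is cosmetic.

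There is, however, a real gap at what you call the main technical step. Your treatment of the quadratic Taylor term explicitly invokes the asymptotic $G^{\rw}(x)\sim c_d|x|^{2-d}$ and the fact that the continuous Laplacian of $|x|^{2-d}$ vanishes. That is a property of the \emph{critical} Green's function only. After your shift you must work with $G^{\rw}_{\lambda''}$ for an arbitrary $\lambda''\le\tfrac{1}{2d}$, and when $\lambda''<\tfrac{1}{2d}$ the function $G^{\rw}_{\lambda''}$ decays exponentially, has no power-law asymptotic, and neither its continuous nor its discrete Laplacian vanishes away from the origin. Nor can you say ``subcritical is easier'': the crude bound $\|G^{\rw}_{\lambda''}*E\|_1\le\|G^{\rw}_{\lambda''}\|_1\|E\|_1=O(\beta)/(1-2d\lambda'')$ blows up precisely as $\lambda''\nearrow\tfrac{1}{2d}$, so the two regimes do not glue uniformly. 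The paper closes this gap with a short but essential extra factorisation (its Lemma~\ref{lem:rhoG}): write
\[
E*G^{\rw}_{\lambda''}=(E*G^{\rw})*(\Delta^{\rw}_{1/(2d)}*G^{\rw}_{\lambda''}),
\]
perform the Edgeworth/harmonicity argument once for the critical $G^{\rw}$ to control the first factor, and observe that
\[
\Delta^{\rw}_{1/(2d)}*G^{\rw}_{\lambda''}=\delta_0-(1-2d\lambda'')\sum_{n\ge 1}(2d\lambda'')^{n-1}p_n
\]
has norm bounded uniformly in $\lambda''$, because $\|p_n\|\le C$ and the coefficients sum to $1$. This passage through the critical Green's function is the missing idea in your sketch.
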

We postpone the proof of both lemmas and first show how they imply
the theorem:

\begin{proof}
[Proof of the theorem given Lemmas \ref{lem:lace} and \ref{lem:solve}]
Fix $\beta$ to be some value sufficiently small so that both Lemma \ref{lem:lace}
and Lemma \ref{lem:solve} hold with this value of $\beta$. The following argument, known as a bootstrap argument, goes back to Slade \cite{S87}.
Define
	\[
	f(\lambda)=\sup_{x\in\mathbb{Z}^{d}}\frac{G_{\lambda}^{\saw}(x)}{G^{\rw}(x)}.
	\]
We first examine $f(0)$. $G_{0}^{\saw}=\delta_{0}$ and of course
$G^{\rw}\ge\delta_{0}$ so $f(0)\le1$. Next we note that $f$ is
continuous in the interval $[0,\lambda_{c})$. Indeed, $\lambda_{c}$
is the radius of convergence of $\sum_{x}G_{\lambda}^{\saw}(x)$ and
hence (lower bounds) the radius of convergence of $G_{\lambda}^{\saw}(x)$
for all $x$. Hence each term $G_{\lambda}^{\saw}(x)/G^{\rw}(x)$
is continuous on our interval. On the other hand, because the sum
defining $G_{\lambda}^{\saw}(x)$ contains only paths of length at
least $|x|$, it also decays exponentially in $x$, uniformly on $[0,\lambda]$,
for all $\lambda<\lambda_{c}$. This means that on any $[0,\lambda]$ with $\lambda<\lambda_c$,
$f$ can be written as the supremum of a finite collection of continuous
functions, and hence is continuous. Since $\lambda$ can be taken
arbitrarily close to $\lambda_{c}$, $f$ is continuous on $[0,\lambda_{c})$.

We now claim that it is not possible that $f(\lambda )\in (2,3]$
for any $\lambda <\lambda _{c}$. Indeed, if $f(\lambda )\leq 3$ then $%
G_{\lambda }^{\saw}(x)\leq 3G^{\rw}(x)$ for all $x$ and the condition of
Lemma \ref{lem:lace} is satisfied. We use Lemma \ref{lem:lace} to find some $%
\Delta _{\lambda }^{\saw}$ with $G_{\lambda }^{\saw}\ast \Delta _{\lambda }^{%
\saw}=\delta _{0}$ satisfying conditions (1)-(3), and then Lemma \ref%
{lem:solve} to find some $G$ such that $G\ast \Delta _{\lambda }^{\saw%
}=\delta _{0}$ and $G(x)\leq 2G^{\rw}(x)$. We now
claim that $G_{\lambda }^{\saw}=G$. Indeed, both functions are in $\ell ^{2}(%
\mathbb{Z}^{d})$ --- $G_{\lambda }^{\saw}$ by assumption and $G$ by the
conclusion of Lemma \ref{lem:solve} --- and so is $\Delta _{\lambda }^{\saw}$
by condition (3). In $\ell ^{2}$, deconvolution can be performed by Fourier
transform and hence is unique. We get that $G_{\lambda }^{\saw}(x)=G(x)\leq
2 G^{\rw}(x)$ so $f(\lambda )\leq 2$. We conclude that $f(\lambda )\not\in (2,3]$ for any $\lambda
<\lambda _{c}$.

Now, if $f$ is continuous, starts below $1$ and cannot traverse
the interval $(2,3]$, then it must be that $f(\lambda)\le2$ for
all $\lambda<\lambda_{c}$, i.e., $G_{\lambda}^{\saw}(x)\le2G^{\rw}(x)$
for all $x$ and all $\lambda<\lambda_{c}$. Finally, by monotone
convergence, $G_{\lambda_{c}}^{\saw}(x)=\lim_{\lambda\nearrow\lambda_{c}}G_{\lambda}^{\saw}(x)$,
so that also $G_{\lambda_{c}}^{\saw}(x)\leq 2G^{\rw}(x)$ for all $x\in \Z^d$.
\end{proof}
\medskip

We move to the proof of Lemmas \ref{lem:lace} and \ref{lem:solve}.
Lemma \ref{lem:lace} essentially relies on the same lace expansion argument as performed by Brydges and Spencer
\cite{BS85} --- we include the proof for completeness, but we will
be a little brief. 
Lemma \ref{lem:solve} is the new ingredient of our paper.
\begin{proof}[Proof of Lemma \ref{lem:lace}]
We follow \cite[Appendix A]{HHS98} closely for the derivation of the lace expansion, and \cite{HHS03} for the analysis of the coefficients arising in it.
As $\beta$ and $\lambda$ are fixed,
let us remove them from the notation and denote our functions by $G^{\saw}$
and $\Delta^{\saw}$. We start by finding a formula for $\Delta^{\saw}$
(or rather, a representation as an infinite sum). Recall the weight
$W(\gamma)$ defined in (\ref{eq:defW}). We define
	\[
	U_{st}(\gamma)=\begin{cases}
	0 & \text{when~}\gamma(s)\ne\gamma(t),\\
	-\beta &\text{when~}\gamma(s)=\gamma(t),
	\end{cases}\qquad\forall0\le s<t\le\len(\gamma),
	\]
so that
	\[
	W(\gamma)=\prod_{s<t}(1+U_{st}(\gamma)).
	\]
Given an interval $I = [a,b]$ of integers with $0 \leq a \leq b$,
we refer to a pair $\{ s, t\}$ ($s<t$) of elements of $I$ as an {\em edge}.
To abbreviate the notation, we write $st$ for $\{ s,t \}$.
A set of edges is called a {\em graph}. A graph $\Gamma$ on $[a,b]$ is said to
be {\em connected} if both $a$ and $b$ are endpoints of edges in $\Gamma$ and if,
in addition, for any $c \in (a,b)$ there is an edge $st \in \Gamma$
such that $s < c < t$ (note that this is unrelated to the usual
definition of graph connectivity).
The set of all graphs on $[a,b]$ is denoted $\Bcal[a,b]$, and the subset
consisting of all connected graphs is denoted $\Gcal [a,b]$.

For integers $0 \leq a < b$, define
	\eqn{
    	K[a,b](\gamma) = \prod_{a \leq s < t \leq b} ( 1 + U_{st}(\gamma) ),
	\qquad
	\text{so that}
	\qquad
	\label{Gsaw}
    	G^{\saw}(x)=\sum_{\gamma \colon 0 \rightarrow x}\lambda^{\len(\gamma)}K[0,\len(\gamma)](\gamma),
	}
where the sum is over all simple random walk paths from $0$ to $x$. Expanding the product in the definition of $K[a,b](\gamma)$, we get
	\eqn{
	\label{Kdef}
    	K [a,b](\gamma)  = \sum_{\Gamma \in \Bcal [a,b] }
    	\prod_{st \in \Gamma}
    	U_ {st}(\gamma).
	}
For $0 \leq a < b$ we define an analogous quantity, in which the sum over
graphs is restricted to connected graphs, namely,
	\eqn{
	\label{Jconn}
    	J[a,b](\gamma) = \sum_{\Gamma \in \Gcal [a,b] }
    	\prod_{st \in \Gamma}
    	U_{st}(\gamma).
	}
We claim that, for $n\ge 1$,
	\eqn{
	\label{KJiden}
    	K[0,n] = K [1,n] + \sum_{m=2}^{n} J [0,m]K [m,n],
	}
To see this, we note from \eqref{Kdef} that the contribution to $K[0,n]$
from all graphs $\Gamma$ for which $0$ is not in an edge is exactly
$K [1,n]$. To resum the contribution from the remaining graphs, we proceed as follows.
When $\Gamma$ does contain an edge ending at $0$, we let $m(\Gamma)>0$
denote the smallest number that is not crossed by an edge, i.e., there is no $st\in \Gamma$ such that $s<m(\Gamma)<t$. We lose
nothing by taking $m \geq 2$, since $U_{a,a+1}=0$ for all $a$.
Resummation over graphs on $[m,n]$ and \eqref{Jconn} proves \eqref{KJiden}.

Let us now define the key quantities in the lace expansion, which is
	\eqn{
	\label{Pi}
    	\Pi(x) =   \sum_{\gamma\colon 0 \rightarrow x}\lambda^{\len(\gamma)}J [0,\len(\gamma)](\gamma)
	}
and 
	\begin{equation}
	\Delta^{\saw}(x):=\Delta^{\rw}_\lambda(x) -\Pi(x).
	\label{eq:defDsaw}
	\end{equation}	
The key to the proof of Lemma \ref{lem:lace} is the estimate
\begin{equation}\label{eq:Pi absolutely}
\sum_{n=0}^\infty \bigg|\sum_{\substack{\gamma:0\to x \\ \len\gamma=n}}\lambda^n J[0,n](\gamma)\bigg|<C\beta|x|^{6-3d}
\end{equation}
which of course implies as a consequence
\begin{equation}\label{eq:Pi}
|\Pi(x)|\le C\beta|x|^{6-3d}.
\end{equation}
To conclude from the definitions and estimate above that $G^{\saw}*\Delta^{\saw}=\delta_0$ note that, by \eqref{KJiden}, 
	\begin{align}
	\label{Gsaw-rec}
	G^{\saw}(x)
        &\stackrel{\textrm{(\ref{Gsaw})}}{=}
          \sum_{\gamma:0\to x}
          \lambda^{\len(\gamma)}K[0,\len(\gamma)](\gamma)
        \stackrel{\textrm{(\ref{KJiden})}}{=}
          \delta_0+
          \sum_{\len(\gamma)\ge 1}
          \lambda^{\len(\gamma)}\Big(K[1,\len(\gamma)]+
          \sum_{m=2}^{\len(\gamma)}J[0,m]K[m,\len(\gamma)]\Big)\nonumber\\
&=\delta_0+\lambda\sum_{y\colon \|y\|=1}G^{\saw}(x-y)+\sum_y \Pi(y)
          G^{\saw}(x-y),
	\end{align}
where the last equality is derived as follows: the $K[1,\len(\gamma)]$
terms we divide according to $\gamma(1)$, which we denote by
$y$. Translation invariance gives that each term is exactly $G^{\saw}(x-y)$. The terms containing $J$ are
divided according to $\gamma(m)$, which we denote by $y$, and again by
translation invariance the sum over $K$ gives $G^{\saw}(x-y)$. Finally, the change of order of summation is justified by (\ref{eq:Pi absolutely}) and $G^{\saw}(x)\le C|x|^{2-d}$. This explains
(\ref{Gsaw-rec}). Rearranging (\ref{Gsaw-rec}) gives
$G^{\saw}*\Delta^{\saw}=\delta_0$, as required.

We move to prove properties (1)-(3) of $\Delta^{\saw}$. The symmetry of $\Delta^{\saw}$ is immediate from the construction,
and the property that $\sum\Delta^{\saw}(x)\ge0$ comes from summing the relation $(G^{\saw}*\Delta^{\saw})(x)=\delta_{0}(x)$ over $x\in \Z^d$, which gives that 
	\begin{equation}\label{eq:muchi}
	\sum_{x\in\mathbb{Z}^{d}}\Delta^{\saw}(x)
	=\frac{1}{\sum_{x\in \Z^d} G^{\saw}(x)},
	\end{equation}
and the last term is clearly non-negative as well as finite since $\lambda<\lambda_c$ which means that $G^{\saw}(x)$ decays
exponentially as $x\to\infty$. Thus the only property
that needs verification is the bound for $\Delta^{\saw}-\Delta^{\rw}$.

Recall that we need to choose some $\lambda'$ and estimate $\Delta_{\lambda}^{\saw}-\Delta_{\lambda'}^{\rw}=\Delta_\lambda^{\rw}-\Delta_{\lambda'}^{\rw}-\Pi_{\lambda}$.
By (\ref{eq:Pi}) 
	\[
	1-2d\lambda=\sum_{x}\Delta_{\lambda}^{\rw}(x)
	=\sum_{x}\big(\Pi_{\lambda}(x)+\Delta^{\saw}_{\lambda}(x)\big)\ge-\sum_{x}C\beta|x|^{3(2-d)}=-C\beta.
	\]
which means that we can choose $\lambda'=\min(\lambda,\frac{1}{2d})$ to
satisfy the conditions of the lemma. Hence, the only thing left is to prove (\ref{eq:Pi absolutely}). 

We next rewrite \eqref{Pi} in a form that can be used to obtain good bounds on $\Pi(x)$. For this, 
we start by introducing the laces that give the lace expansion its name.
A {\em lace} is a minimally connected graph, i.e., a connected graph for which
the removal of any edge would result in a disconnected graph. The set of
laces on $[a,b]$ is denoted $\Lcal[a,b]$, and the set of laces on
$[a,b]$ consisting of exactly $N$ edges is denoted $\Lcal^{\sss (N)} [a,b]$.
Given a connected graph $\Gamma$, the following prescription associates to
$\Gamma$ a unique lace ${L}_\Gamma$:  The lace ${L}_\Gamma$ consists
of edges $a_1 b_1, a_2 b_2, \ldots$, with $a_1,b_1,b_2,a_2, \ldots$
determined, in that order, by
	\[
    	b_1 = \max \{t \colon a t\in \Gamma \} ,  \;\;\;\; a_1 = a,
	\]
	\[
    	b_{i} = \max \{t \colon \exists a < t_{i-1} \mbox{ such that }
    	at \in \Gamma  \}, \quad
    	a_{i} = \min \{ s \colon sb_{i} \in \Gamma \}  .
	\]
See Figure \ref{fig:abx}. Given a lace $L$, the set of all edges $st \not\in L$
such that ${L}_{L\cup \{st\} } = L $ is denoted  $\Ccal (L)$.
Edges in $\Ccal (L)$ are said to be {\em compatible} with $L$. Now,
$L_\Gamma=L$ if and only if $L\subset\Gamma$ and all edges in
$\Gamma\setminus L$ are compatible with $L$. This allows to write
\[
\sum_{\Gamma:L_\Gamma=L}\prod_{st\in \Gamma\setminus L}U_{st}
=\prod_{st\in\Ccal(L)}(1+U_{st})
\]
and then partially resum the right-hand side of \eqref{Jconn}, to obtain
	\eqan{
	\label{Jconngraph}
    	J[a,b] & =
    	\sum_{L \in \Lcal[a,b] } \; \sum_{\Gamma\colon {L}_\Gamma = L}
    	\; \prod_{st \in L} U_{st} \prod_{s't' \in \Gamma \backslash L}
    	U_{s't'}
    	= 
    	\sum_{L \in \Lcal[a,b] } \prod_{st \in L} U_{st}
    	\prod_{s't' \in \Ccal (L) } ( 1 + U_{s't'} ) .
	}
For $0 \leq a<b$, we define $J^{\sss(N)}[a,b]$ to be the contribution to
\eqref{Jconngraph} coming from laces consisting of exactly $N$ edges:
	\eqn{
	\label{JNdef}
    	J^{\sss (N)} [a,b] =
    	\sum_{L \in \Lcal^{(N)} [a,b] } \prod_{st \in L} (-U_{st})
    	\prod_{s't' \in \Ccal (L) } ( 1 + U_{s't'} ), \qquad N \geq 1.
	}
Then, by \eqref{Pi},
	\eqn{
	\label{Jdef}
    	J [a,b] = \sum_{N=1}^{\infty} (-1)^N J^{\sss (N)} [a,b]
	\qquad
	\text{and}
	\qquad
    	\Pi(x) = \sum_{N=1}^\infty (-1)^N \Pi^{\sss(N)} (x) ,
	}
where we define
	\eqan{
	\label{PiNdef}
    	\Pi^{\sss(N)} (x) & =
    	\sum_{\gamma\colon  0 \rightarrow x} \lambda^{\len(\gamma)}J^{\sss(N)} [0,\len(\gamma)](\gamma)
    	\\ \nonumber
    	&=
   	\sum_{\gamma\colon  0 \rightarrow x} \lambda^{\len(\gamma)}    \sum_{L \in \Lcal^{\sss(N)} [0,\len(\gamma)] } \prod_{st \in L}
    	(-U_{st}(\gamma))
    	\prod_{s't' \in \Ccal (L) } ( 1 + U_{s't'}(\gamma) ).
	}

\begin{figure}
\centering\input{lacepdf.tex}

\caption{Laces}\label{fig:abx}
\end{figure}
We will now show that the sum over $N$ converges \emph{absolutely}.
The product over $st\in\mathcal{C}(L)$ will be easier to handle
when we restrict it. Let therefore $\mathcal{D}(L)$ be the set of edges
$st$ such that the open interval $(s,t)$ does not contain an $a_i$ or $b_i$ for any $(a_i,b_i)\in L$.
Clearly $\mathcal{D}(L)\subseteq\mathcal{C}(L)$ and therefore $\prod_{\mathcal{C}(L)}(1+U_{st})\le\prod_{\mathcal{D}(L)}(1+U_{st})$.
Once we restrict, the sum over $\gamma$ becomes independent between
any two consecutive elements of $L$. Here we call the ordered set $\{a_{1},a_2, b_{1},a_3\dotsc,b_{\sss N-1},b_{\sss N}\}$ the elements of the lace $L=\{a_{1}b_{1},\dotsc,a_{\sss N}b_{\sss N}\}$.

Calling $\gamma_{i}$ the piece of the path $\gamma$ between the $i^{\textrm{th}}$ and $(i+1)^{\textrm{st}}$ elements of $L$, we get 
	\[
	\prod_{st\in\mathcal{D}(L)}(1+U_{st}(\gamma))=\prod_{i=1}^{|L|}W(\gamma_{i})
	\]
We now claim that inserting this into the definition of $\Pi^{(N)}$ gives, for $N>1$,
	\begin{multline}
	|\Pi^{\sss(N)}(x)|\le\beta^N\sum_{0=x_{1},\dotsc,x_{{\sss N}}=x}G^{\saw}(x_{1}-x_{2})^{2}
	G^{\saw}(x_{3}-x_{1})
	G^{\saw}(x_{2}-x_{3})\times \\
	\dotsb \times G^{\saw}(x_{{\sss N}-1}-x_{{\sss N}-2}) 
	G^{\saw}(x_{{\sss N}}-x_{{\sss N}-2}) G^{\saw}(x_{{\sss N}}-x_{{\sss N}-1})^2.
	\label{eq:boundAl}
\end{multline}
(see again Figure 1). Indeed, the terms $U_{st}$ give the factor $\beta^N$ as well as restrictions $\gamma(a_i)=\gamma(b_i)$ for all $i$. Under this restrictions $\gamma$ breaks into paths $\gamma_i$ which are independent given their endpoints, so their sum gives $G^{\saw}$. This justifies (\ref{eq:boundAl}).

This description does not quite hold for $N=1$, as in this case
we do not get $G^{\saw}(0)$ as expected, since we are missing the term
$(1+U_{0n}(\gamma))$ in the product, but we may still bound 
	\[
	\Pi^{\sss(1)}(0)\le\frac{\beta}{1-\beta}G^{\saw}(0),\qquad\qquad \Pi^{\sss(1)}(x)=0\quad\forall x\ne0.
	\]
With these estimates in hand, we can bound $\Pi$.

Now $\Pi^{\sss(1)}$ clearly poses no problems. For $\Pi^{\sss(2)}$ we have $|\Pi^{\sss(2)}(x)|\le\beta^{2}G^{\saw}(x)^{3}.$
By our assumptions $G^{\saw}(x)^{3}\le27G^{\rw}(x)^{3}\le C|x|^{6-3d}$, as required.
For the next terms we need the following lemma:

\begin{lem}
\label{lem:justacalcul}
Let $d>4$. For any $u,v\in\mathbb{Z}^{d}$,
	\[
	\sum_{w\in\mathbb{Z}^{d}}|w|^{4-2d}|w-u|^{2-d}|w-v|^{2-d}\le C|u|^{2-d}|v|^{2-d}.
	\]
\end{lem}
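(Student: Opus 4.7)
The plan is to reduce the estimate to a standard convolution bound via Cauchy-Schwarz. Writing $|w|^{4-2d}=|w|^{2-d}\cdot|w|^{2-d}$ and pairing one copy of $|w|^{2-d}$ with $|w-u|^{2-d}$ and the other with $|w-v|^{2-d}$, the Cauchy-Schwarz inequality yields
\[
\sum_{w}|w|^{4-2d}|w-u|^{2-d}|w-v|^{2-d}\le\Bigl(\sum_{w}|w|^{4-2d}|w-u|^{4-2d}\Bigr)^{1/2}\Bigl(\sum_{w}|w|^{4-2d}|w-v|^{4-2d}\Bigr)^{1/2}.
\]
This symmetric splitting of $|w|^{4-2d}$ is the only real ``trick'' in the argument; the two factors on the right are easier to bound than the original sum precisely because the exponent $2d-4$ now exceeds $d$.

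The main intermediate claim I would establish is that for every $\alpha>d$ and every $y\in\Z^d$,
\[
\sum_{w\in\Z^d}|w|^{-\alpha}|w-y|^{-\alpha}\le C|y|^{-\alpha},
\]
with the convention $|0|^{-\alpha}=1$ in force. I would split $\Z^d$ into the three regions $A=\{|w|\le|y|/2\}$, $B=\{|w-y|\le|y|/2\}$, and $E=\Z^d\setminus(A\cup B)$. On $A$ one has $|w-y|\asymp|y|$, so the contribution is bounded by $C|y|^{-\alpha}\sum_{w\in\Z^d}|w|^{-\alpha}\le C|y|^{-\alpha}$ because $\sum_w|w|^{-\alpha}$ converges when $\alpha>d$; region $B$ is symmetric. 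On $E$, both $|w|$ and $|w-y|$ exceed $|y|/2$, so bounding one factor by $(|y|/2)^{-\alpha}$ leaves $C|y|^{-\alpha}\sum_{|w|\ge|y|/2}|w|^{-\alpha}$; the tail sum is at most $C|y|^{d-\alpha}$, producing an overall bound of $C|y|^{d-2\alpha}\le C|y|^{-\alpha}$. The case $y=0$ is immediate from the convention.

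Taking $\alpha=2d-4$ is exactly where the hypothesis $d>4$ enters, since $\alpha>d$ there amounts to $2d-4>d$. Applying the intermediate claim with $y=u$ and with $y=v$ bounds the two summands in the Cauchy-Schwarz inequality by $C|u|^{4-2d}$ and $C|v|^{4-2d}$ respectively, and taking half-powers and multiplying yields the claimed $C|u|^{2-d}|v|^{2-d}$. I do not foresee a substantive obstacle: the Cauchy-Schwarz splitting is the key observation, and the rest is a routine region decomposition.
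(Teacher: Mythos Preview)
Your proof is correct and follows essentially the same approach as the paper: the identical Cauchy--Schwarz splitting of $|w|^{4-2d}$ into two copies of $|w|^{2-d}$, followed by a region decomposition to bound $\sum_w|w|^{4-2d}|w-y|^{4-2d}\le C|y|^{4-2d}$ using $2d-4>d$. The only cosmetic difference is that the paper splits into two regions ($|w|\le|u|/2$ and $|w|>|u|/2$) and cites \cite[Proposition 1.7(i)]{HHS03}, whereas you use three regions; your region $E$ could in fact be absorbed into $B$ by the simpler bound $\sum_{|w|\ge|y|/2}|w|^{-\alpha}\le C$, but the argument is fine as written.
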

\begin{proof}
By Cauchy-Schwarz
	\eqn{
	\Big(\sum_{w\in\mathbb{Z}^{d}}|w|^{4-2d}|w-u|^{2-d}|w-v|^{2-d}\Big)^2
	\le \Big(\sum_{w\in\mathbb{Z}^{d}}|w|^{4-2d}|w-u|^{4-2d}\Big)
	\Big(\sum_{w\in\mathbb{Z}^{d}}|w|^{4-2d}|w-v|^{4-2d}\Big).
	}
For $d>4$, each term can be estimated simply by splitting the sum to $|w|>|u|/2$ and $|w|\le |u|/2$, see a detailed calculation in \cite[Proposition 1.7(i)]{HHS03}. We get
	\[
	\sum_{w\in\mathbb{Z}^{d}}|w|^{4-2d}|w-u|^{4-2d}\leq C|u|^{4-2d}.\qedhere
	\]
\end{proof}	
To use this lemma, define $A^{\sss(2)}(x)=|x|^{6-3d}$, and for $N\ge 3$,
	\[
	A^{\sss(N)}(x)=\sum_{0=x_{1},\dotsc,x_{{\sss N}}=x}
	|x_{1}-x_{2}|^{4-2d}|x_{3}-x_{1}|^{2-d}|x_{2}-x_{3}|^{2-d}
	\dotsb|x_{{\sss N}}-x_{{\sss N}-1}|^{4-2d}
	\]
(the terms taken from Figure \ref{fig:abx}) so that by (\ref{eq:boundAl}) and $G^{\saw}(x)\le3G^{\rw}(x)\le C|x|^{2-d}$
we get $|\Pi^{\sss(N)}(x)|\le(C\beta)^{N}A^{\sss(N)}(x)$. 

We will show by induction that
	\eqn{
	A^{\sss(N)}(x)\le C^N |x|^{6-3d},
	}
which will show that $|\Pi^{\sss(N)}(x)|\le(C\beta)^{N}A^{\sss(N)}(x)\leq (C_1\beta)^{N}|x|^{6-3d},$ 
as required.

There is nothing to prove for $N=2$. To advance the induction hypothesis, we write
	\eqan{
	A^{\sss(N+1)}(x)
 	& =\sum_{x_{2},\dotsc,x_{{\sss N}-1}}\bigg(\textrm{terms without }x_{{\sss N}}\bigg)
	\sum_{x_{\sss N}}|x-x_{{\sss N}}|^{4-2d}|x_{{\sss N}}-x_{{\sss N}-1}|^{2-d}|x_{{\sss N}}-x_{{\sss N}-2}|^{2-d}\nn\\
 	& \le C\sum_{x_{2},\dotsc,x_{{\sss N}-1}}\bigg(\textrm{terms without }x_{{\sss N}}\bigg)|x-x_{{\sss N}-1}|^{2-d}|x-x_{{\sss N}-2}|^{2-d}
	=CA^{\sss (N)}(x),
	}
where the inequality follows from using Lemma \ref{lem:justacalcul}
with $w=x-x_{{\sss N}}$, $u=x-x_{{\sss N}-1}$ and $v=x-x_{{\sss N}-2}$ (the
``terms without $x_{\sss N}$'' contain one copy of $|x-x_{N-1}|^{2-d}$,
and with the second copy from Lemma \ref{lem:justacalcul} we get the
correct power, $4-2d$).

We may now choose $\beta_{0}$, and we choose it to be $1/(2C_{1})$.
With this choice of $\beta_{0}$, for every $\beta<\beta_{0}$, 
$\Pi^{\sss (N)}(x)$ decays exponentially with $N$, showing
the estimate $|\Pi(x)|\le C\beta|x|^{3(d-2)}$ and completing the proof of Lemma \ref{lem:lace}.
\end{proof}

\section{Proof of Lemma \ref{lem:solve}, with Banach algebras}

We start by defining a norm on $f\colon \mathbb{Z}^{d}\to\mathbb{R}$ by
	\[
	\|f\|:=\max\Big\{\sum_{x\in\mathbb{Z}^{d}}|f(x)|,
	\sup_{x\in\mathbb{Z}^{d}}|f(x)|\cdot|x|^{d}\Big\}
	\]
where $|x|$ denotes, say, the $\ell^{2}$ norm in $\Z^d$. Our norm is a Banach
algebra norm with respect to convolution, up to a constant. Indeed,
let $f$ and $g$ satisfy that $\|f\|,\|g\|\le1$. Then
	\begin{equation}
	\sum_{x\in\mathbb{Z}^{d}}|(f*g)(x)|\le\Big(\sum_{x}|f(x)|\Big)
	\Big(\sum_{x}|g(x)|\Big)
	\le\|f\|\cdot\|g\|\le1,\label{eq:fgl1}
	\end{equation}
and for every $x\in\mathbb{Z}^{d}$,
	\[
	|(f*g)(x)|\le\sum_{y\in\mathbb{Z}^{d}}|f(y)| |g(x-y)|
	=\sum_{|y|>|x-y|}|f(y)| |g(x-y)|+\sum_{|y|\le|x-y|}|f(y)| |g(x-y)|.
	\]
For the first term, whenever $|y|>|x-y|$, we have $|y|>|x|/2$
and hence
	\[
	\sum_{|y|>|x-y|}|f(y)\|g(x-y)|
	\le\sup_{|y|>\frac{1}{2}|x|}|f(y)|\cdot\sum_{|y|>|x-y|}|g(x-y)|
	\le\left(\tfrac{1}{2}|x|\right)^{-d}\|f\|\cdot\|g\|.
	\]
A similar estimate holds for the other term, now using that $|x-y|\geq |x|/2$ when $|y|\le|x-y|$, and we get
	\[
	|(f*g)(x)|\le2^{d+1}|x|^{-d}.
	\]
With (\ref{eq:fgl1}) we get
	\[
	\|f*g\|\le2^{d+1}\|f\|\cdot\|g\|.
	\]
In particular $B=\{f\colon \|f\|<\infty\}$ has a Banach algebra structure.
While one can find an equivalent norm on $B$ that is a proper Banach
algebra norm, it will be simpler to just use the norm defined above.
We get that if $\|f-\delta_{0}\|<2^{-d-1}$, then $f$ is invertible
in the algebra and 
	\begin{equation}
	\|f^{-1}-\delta_{0}\|\le\frac{2^{d+1}\|f-\delta_{0}\|}
	{1-2^{d+1}\|f-\delta_{0}\|}.
	\label{eq:inversion}
	\end{equation}

The following lemma forms the heart of our analysis:

\begin{lem}
\label{lem:Qrho} Fix $d>2$. Assume $\rho\colon \mathbb{Z}^{d}\to\mathbb{R}$ satisfies 
\begin{enumerate}
\item $\rho$ is symmetric to coordinate permutations and flipping.
\item $\sum_{x}\rho(x)=0$.
\item $|\rho(x)|\le|x|^{-d-4}$.
\end{enumerate}
Then $\|\rho*G^{\rw}\|\le C$.
\end{lem}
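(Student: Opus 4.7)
The plan is to prove the pointwise bound
\[
|(\rho\ast G^{\rw})(x)|\le C(|x|+1)^{-d-2}\log(|x|+2) \qquad \text{for } x\ne 0,
\]
together with the trivial $|(\rho\ast G^{\rw})(0)|\le\|\rho\|_1\|G^{\rw}\|_\infty\le C$. Both components of $\|\cdot\|$ follow from this at once: the right-hand side is summable over $\mathbb{Z}^d$, and multiplying by $|x|^d$ gives $O(|x|^{-2}\log|x|)=O(1)$.

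The starting move is to combine the two cancellation conditions on $\rho$ --- symmetry under $y\mapsto -y$ and $\sum_y\rho(y)=0$ --- to rewrite
\[
(\rho\ast G^{\rw})(x)=\tfrac12\sum_y\rho(y)\bigl[G^{\rw}(x+y)+G^{\rw}(x-y)-2G^{\rw}(x)\bigr].
\]
This is really the crux: without it the large-$|y|$ tail of $\rho$ would couple to $G^{\rw}(x)\sim|x|^{2-d}$ and destroy any hope of $\ell^1$-summability.

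I then split the sum at $|y|=|x|/2$. For $|y|\ge|x|/2$ the decay $|\rho(y)|\le|y|^{-d-4}$ more than absorbs the size $|G^{\rw}(z)|\le C|z|^{2-d}$ of the three summands, yielding $O(|x|^{-d-2})$ by a direct triangle-inequality calculation (splitting $|y|$ further into the bands $|y|\sim|x|$ and $|y|\gg|x|$). For $|y|<|x|/2$ I Taylor-expand the second difference around $x$ using the classical derivative bounds $|D^k G^{\rw}(x)|\le C|x|^{2-d-k}$ (a consequence of Uchiyama's asymptotic expansion \cite{U98}):
\[
G^{\rw}(x+y)+G^{\rw}(x-y)-2G^{\rw}(x)=y^\top D^2G^{\rw}(x)\,y+O(|y|^4|x|^{-d-2}).
\]
By the symmetries of $\rho$ the matrix of second moments is isotropic, $\sum_y\rho(y)y_iy_j=\sigma^2\delta_{ij}$ with $|\sigma^2|\le C$, so the quadratic term collapses to $\sigma^2\Delta G^{\rw}(x)$ after summation against $\rho$; and since $G^{\rw}(x)\sim a|x|^{2-d}$ is asymptotically harmonic, $\Delta G^{\rw}(x)=O(|x|^{-d-2})$. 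The Taylor remainder, paired with $|\rho(y)||y|^4\le|y|^{-d}$, contributes $O(|x|^{-d-2}\log|x|)$ because $\sum_{|y|<R}|y|^{-d}\sim\log R$.

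The main bookkeeping subtlety is that the moment identity holds only for the full sum over $\mathbb{Z}^d$, while the Taylor expansion only applies to the truncation $|y|<|x|/2$; the resulting tail residual is however bounded by $\sum_{|y|\ge|x|/2}|y|^{-d-2}\cdot|D^2G^{\rw}(x)|=O(|x|^{-d-2})$, so it fits within the target. A cleaner reformulation is to first decompose $\rho=c\Delta^{\rw}_{1/(2d)}+\rho_2$ with $c=-d\sigma^2$ chosen so that $\rho_2$ has vanishing second moments; then $\rho\ast G^{\rw}=c\delta_0+\rho_2\ast G^{\rw}$ (using $\Delta^{\rw}_{1/(2d)}\ast G^{\rw}=\delta_0$), the first summand trivially has norm $|c|\le C$, and the analysis applied to $\rho_2$ goes through with no tail correction.
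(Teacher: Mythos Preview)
Your argument is correct and follows essentially the same route as the paper's: split the convolution at $|y|=\tfrac12|x|$, handle the far part by the decay of $\rho$, and for the near part combine the Edgeworth expansion of $G^{\rw}$ with a Taylor expansion in $y$, using $\sum\rho=0$, the symmetries of $\rho$, and the harmonicity of $|x|^{2-d}$ to eliminate the low-order terms. The only differences are packaging --- you symmetrise into a second difference up front (so odd Taylor orders cancel automatically) and push the expansion one order further, obtaining $O(|x|^{-d-2}\log|x|)$ in place of the paper's $O(|x|^{-d-1})$ --- and a small notational slip: since $G^{\rw}$ is a lattice function, your ``$D^kG^{\rw}$'' should be read as the derivative of the smooth Edgeworth approximant $a|x|^{2-d}+b|x|^{-d}$, with the $O(|x|^{-d-2})$ Edgeworth remainder bounded trivially in the second difference, which is precisely how the paper organises the computation.
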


\begin{proof}
By \cite{U98}, the random walk Green's function has an expansion of the form 
	\begin{equation}
	G^{\rw}(x)=a|x|^{2-d}+b|x|^{-d}+O(|x|^{-d-2}). 
	\label{Edgeworth-Grw}
	\end{equation}%
(such an expansion is sometimes called an \textquotedblleft Edgeworth
expansion\textquotedblright ). Therefore, the sum defining $(\rho \ast G^{\rw%
})(x)$ converges absolutely for every $x\in \mathbb{Z}^{d}$ so this function
is well defined. Write 
	\begin{equation*}
	(\rho \ast G^{\rw})(x)=\sum_{y}\rho (y)G^{\rw}(x-y)=\sum_{|y|<\frac{1}{2}%
	|x|}\rho (y)G^{\rw}(x-y)+\sum_{|y|\geq \frac{1}{2}|x|}\rho (y)G^{\rw%
	}(x-y)=I+II.
	\end{equation*}%
We start with $I$ and write it as $I=I_{1}+I_{2}+I_{3}$, $I_{j}$ from the
three parts on the RHS of (\ref{Edgeworth-Grw}). For $I_{1}
$ we Taylor expand $|x-y|^{2-d}$ around $x$ to order 3 and get 
	\begin{align*}
	\frac{1}{a}I_{1}& =\sum_{|y|<\frac{1}{2}|x|}\rho (y)|x-y|^{2-d}=\sum_{|y|<%
	\frac{1}{2}|x|}\rho (y)\bigg[|x|^{2-d}+\sum_{i=1}^{d}y_{i}\cdot
	(2-d)x_{i}|x|^{-d} \\
	& \quad +\sum_{i,j=1}^{d}y_{i}y_{j}\left( -(2-d)d\cdot
	x_{i}x_{j}|x|^{-d-2}+\delta _{ij}(2-d)|x|^{-d}\right) +O(|y|^{3}|x|^{-d-1})%
	\bigg].
	\end{align*}%
We now bound these terms. For the first we write 
	\begin{equation*}
	\Big|\sum_{|y|<\frac{1}{2}|x|}\rho (y)|x|^{2-d}\Big|=|x|^{2-d}\Big|%
	\sum_{|y|\geq \frac{1}{2}|x|}\rho (y)\Big|\leq |x|^{2-d}\sum_{|y|\geq \frac{1%
	}{2}|x|}|y|^{-d-4}\leq C|x|^{-d-2},
	\end{equation*}%
where in the equality we used that $\sum_{x}\rho (x)=0$. For the second, we
use the symmetry of $\rho $ to flipping of $y_{i}$ to conclude that 
	\begin{equation*}
	\sum_{|y|<\frac{1}{2}|x|}\rho (y)y_{i}=0
	\end{equation*}%
and similarly for the off-diagonal second-order terms, i.e., for $\smash{\sum\limits_{|y|<\frac{1}{2}|x|}} \rho
(y)y_{i}y_{j}$ for $i\neq j$. The on-diagonal terms are equal to 
	\begin{equation*}
	\sum_{i=1}^{d}\sum_{|y|<\frac{1}{2}|x|}\rho (y)y_{i}^{2}\left(
	(2-d)|x|^{-d}-(2-d)d\cdot x_{i}^{2}|x|^{-d-2}\right) 
	\end{equation*}%
and the symmetry of $\rho $ to coordinate permutations shows that $\smash{\sum\limits_{|y|<\frac{1}{2}|x|}} \rho
(y)y_{i}^{2}$ does not depend on $i$. We take it out of the sum and see that
	\begin{equation}
	\sum_{i=1}^{d}\left( (2-d)|x|^{-d}-(2-d)d\cdot x_{i}^{2}|x|^{-d-2}\right) =0.
	\label{harmonicity}
	\end{equation}%
Finally, the third order terms are bounded by 
	\begin{equation*}
	\sum_{|y|<\frac{1}{2}|x|}|\rho (y)||y|^{3}|x|^{-d-1}\leq |x|^{-d-1}\sum_{|y|<%
	\frac{1}{2}|x|}|y|^{-d-4}|y|^{3}\leq C|x|^{-d-1}.
	\end{equation*}%
Putting all these estimates together gives that 
	\begin{equation*}
	|I_{1}|\leq C|x|^{-d-1}.
	\end{equation*}%
The estimates of $I_2$, $I_3$ and $II$ are much simpler. To estimate
$|I_{2}|,$ we Taylor expand $|x-y|^{-d}$ to first order, i.e.\ $|x-y|^{-d}=|x|^{-d}+O(|y||x|^{-d-1})$. A
similar argument shows that $|I_{2}|\leq C|x|^{-d-1}$. (We don't need the
harmonicity of $|x|^{2-d}$ which is the true reason for the
cancellation in (\ref{harmonicity}) above.) For $I_3$, we bound, again using that $|x-y|>\tfrac12 |x|$ when $|y|<\tfrac12 |x|$,
	\begin{equation*}
	|I_{3}|\leq C\sum_{|y|<\frac{1}{2}|x|}|\rho (y)||x-y|^{-d-2}
	\le C|x|^{-d-2}\sum_{y\in\Z^d}|\rho(x)|\leq C|x|^{-d-2}.
	\end{equation*}%
For $II$, we split $II=II_{1}+II_{2}$, depending on whether $|x-y|\leq |x|$ or $|x-y|>|x|$ and use $|y|\geq \tfrac12 |x|$ and $|\rho(y)|\leq |y|^{-d-4}$ to bound
	\begin{equation}
	|II_{1}|\leq 2^{d+4}|x|^{-d-4}\sum_{y\colon |x-y|\leq |x|}G^{\rw}(x-y)\leq
	C|x|^{-d-2},
	\end{equation}%
while  for use $|x-y|> |x|$, we use that $G^{\rw}(x)\leq C|x|^{2-d}$ by \eqref{Edgeworth-Grw} to bound
	\begin{equation}
	|II_{2}|\leq C|x|^{2-d}\sum_{|y|\geq \frac{1}{2}|x|}|\rho (y)|\leq
	C|x|^{2-d}|x|^{-4}=C|x|^{-d-2}.
	\end{equation}%
We conclude that $|(\rho \ast G^{\rw})(x)|\leq
C|x|^{-d-1}$. This proves the lemma.
\end{proof}

\noindent
{\bf Remark.} It seems as if Lemma \ref{lem:Qrho} makes a stringent requirement on
the types of random walks for which the argument can be
applied, as Edgeworth expansions are not easy to get. For example, if
one wishes to apply the argument for the Cayley graph of, say, the
Heisenberg group, then the natural analog of an Edgeworth expansion is
not known. We have a more roundabout proof of Lemma \ref{lem:Qrho}
that only uses the local central limit theorem. 
This argument will be presented elsewhere.
\smallskip

%

In the following lemma, we extend Lemma \ref{lem:Qrho} to the subcritical SRW Green's function $G_{\mu}^{\rw}$,  i.e., $G_{\mu}^{\rw}(x)=\sum_{n\geq 0}(2d\mu)^{n}p_{n}(x)$. Note that $G_{\mu}^{\rw}*\Delta_{\mu}^{\rw}=\delta_{0}$.
\begin{lem}
\label{lem:rhoG} 
Let $d>2$. Let $\rho$ be as in Lemma \ref{lem:Qrho} and let
$\mu\in\left[-\frac 1{4d},\frac{1}{2d}\right]$. Then 
	\[
	\|\rho*G_{\mu}^{\rw}\|\le C,
	\]
where $C$ does not depend on $\mu$.
\end{lem}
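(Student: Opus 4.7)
My plan is to reduce Lemma \ref{lem:rhoG} to Lemma \ref{lem:Qrho} by writing $G_\mu^{\rw}$ as a perturbation of $G^{\rw}$. Set $m=\tfrac{1}{2d}-\mu\in[0,\tfrac{3}{4d}]$ and $g_\mu:=G_\mu^{\rw}*\mathbb{1}_{|x|=1}$. Since $\Delta_\mu^{\rw}=\Delta^{\rw}+m\,\mathbb{1}_{|x|=1}$, convolving $G_\mu^{\rw}*\Delta_\mu^{\rw}=\delta_0$ with $G^{\rw}$ gives the resolvent-type identity $G_\mu^{\rw}=G^{\rw}-m\,G^{\rw}*g_\mu$, and convolving with $\rho$ yields
\[
\rho*G_\mu^{\rw}=h-m\,h*g_\mu,\qquad h:=\rho*G^{\rw}.
\]
Lemma \ref{lem:Qrho} supplies $\|h\|\le C$, so the task reduces to bounding $\|m\,h*g_\mu\|$ uniformly in $\mu$.

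I would achieve this by proving the stronger statement $\|m\,g_\mu\|\le C$ uniformly, because the Banach algebra property then yields $\|m\,h*g_\mu\|\le 2^{d+1}\|h\|\,\|m\,g_\mu\|\le C$ and the proof is complete. The $\ell^1$ half of $\|m\,g_\mu\|$ is immediate: using $|G_\mu^{\rw}|\le G_{|\mu|}^{\rw}$ one has $\|G_\mu^{\rw}\|_1\le 1/(1-2d|\mu|)$, so $m\,\|g_\mu\|_1\le 2dm/(1-2d|\mu|)$, which stays uniformly bounded on the admissible $\mu$-range.

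The $|x|^d$-weighted $\ell^\infty$ part is where the real work lies. The crude pointwise bound $|G_\mu^{\rw}(x)|\le G^{\rw}(x)\le C|x|^{2-d}$ gives only $m\,|x|^d|g_\mu(x)|\le Cm|x|^2$, which is \emph{not} bounded when $|x|\gg 1/\sqrt m$. One needs a Yukawa-type estimate $|G_\mu^{\rw}(x)|\le C|x|^{2-d}\Phi(|x|\sqrt m)$ with $\sup_{t\ge 0}t^2\Phi(t)<\infty$, interpolating between $|x|^{2-d}$ behaviour on scales $|x|\lesssim 1/\sqrt m$ and exponential decay at rate $\sqrt m$ on larger scales. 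Setting $t:=|x|\sqrt m$, this yields $m\,|x|^d g_\mu(x)\le C\,t^2\Phi(t)\le C$ uniformly, as needed.

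The main obstacle is establishing this Yukawa estimate uniformly in $\mu\in[-\tfrac{1}{4d},\tfrac{1}{2d}]$. I would derive it from standard (Fourier-free) Gaussian heat kernel bounds $p_n(x)\le Cn^{-d/2}e^{-c|x|^2/n}$ --- obtainable e.g.\ by Chernoff --- inserted into $G_\mu^{\rw}(x)=\sum_n(2d\mu)^n p_n(x)$ and optimized over $n$. The range $|\mu|\le 1/(4d)$ is comparatively easy, since $G_\mu^{\rw}$ already decays exponentially at a fixed rate; the delicate regime is $\mu$ close to $1/(2d)$, where the decay rate degenerates like $\sqrt m$ and must be tracked carefully to give the bounded profile $\Phi$.
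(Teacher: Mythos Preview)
Your decomposition is essentially the paper's. Indeed $G_\mu^{\rw}*\Delta^{\rw}=\delta_0-m\,g_\mu$ (immediate from $\Delta_\mu^{\rw}-\Delta^{\rw}=m$ on neighbours of $0$), so the paper's factorisation $\rho*G_\mu^{\rw}=(\rho*G^{\rw})*(\Delta^{\rw}*G_\mu^{\rw})$ and your identity $\rho*G_\mu^{\rw}=h-m\,h*g_\mu$ are the same thing, and both reduce the lemma to bounding $\|m\,g_\mu\|$. The difference lies only in how you propose to establish this bound. You aim for a pointwise Yukawa-type estimate on $G_\mu^{\rw}$, which is correct but more than needed. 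The paper takes a shortcut: writing
\[
m\,g_\mu=(1-2d\mu)\sum_{n\ge 1}(2d\mu)^{n-1}p_n,
\]
the very heat-kernel bound $p_n(x)\le Cn^{-d/2}\e^{-c|x|^2/n}$ you planned to use already gives $\|p_n\|\le C$ \emph{uniformly in $n$} (for the weighted-$\ell^\infty$ part, $\sup_x |x|^d n^{-d/2}\e^{-c|x|^2/n}=\sup_{t\ge 0} t^{d/2}\e^{-ct}<\infty$). Summing the geometric series then yields $\|m\,g_\mu\|\le C(1-2d\mu)/(1-|2d\mu|)\le C$ directly, with no need for the ``delicate regime'' analysis you anticipate near $\mu=\tfrac{1}{2d}$: there the prefactor $1-2d\mu$ and the divergent geometric sum cancel exactly. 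Your route would work, but the paper's is shorter and sidesteps the uniformity-in-$\mu$ issue entirely.
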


\begin{proof} 
Write
	\[
	\rho*G_{\mu}^{\rw}=\rho*G^{\rw}*\Delta^{\rw}*G_{\mu}^{\rw}.
	\]
(recall that $G^{\rw}=G^{\rw}_{1/(2d)}$ and $\Delta^{\rw}=\Delta^{\rw}_{1/(2d)}$). Notice that we do not need to put any parenthesis in this expression
as associativity follows from the fact that all sums converge absolutely,
which can be easily seen from the upper bounds for the various terms.
Since we already know that $\|\rho*G^{\rw}\|\le C$ by Lemma \ref{lem:Qrho},
we need only bound $\|G_{\mu}^{\rw}*\Delta^{\rw}\|$ (note again that
this is not $\Delta_{\mu}^{\rw}$ but rather $\Delta_{1/(2d)}^{\rw}$). 
Noting that $p_{n}*p_{m}=p_{n+m}$, we get
	\[
	G_{\mu}^{\rw}*\Delta^{\rw}
	=\Big(\sum_{n=0}^{\infty}(2d\mu)^{n}p_{n}\Big)*(\delta_{0}-p_{1})
	=\delta_{0}-(1-2d\mu)\sum_{n=1}^{\infty}(2d\mu)^{n-1}p_{n}.
	\]
For $\mu=1/(2d)$, this is identically equal to $\delta_0$, and there is nothing to prove. Thus, we can assume that $\mu\in [-\frac{1}{4d}, \frac{1}{2d})$.
Now, since $p_{n}(x)\le Cn^{-d/2}\e^{-c|x|^{2}/n}$ we get that $\|p_{n}\|\le C$.
Hence,
	\[
	\|G_{\mu}^{\rw}*\Delta^{\rw}\|\le1+(1-2d\mu)\sum_{n=1}^{\infty}(2d\mu)^{n-1}\|p_{n}\|\le C,
	\]
and the lemma is proved. 
\end{proof}

\begin{proof}[Proof of Lemma \ref{lem:solve}]
Recall that the input of the lemma
is a function $\Delta$ satisfying conditions (1)-(3) of Lemma \ref{lem:lace}.
These conditions are quite close to the conditions on $\rho$
in Lemmas \ref{lem:Qrho} and \ref{lem:rhoG}, only a linear map is required to pass from
one to the other. We define $\mu$ to be such that 
	\eqn{
	\label{mu-choice}
	\sum_{x\in\mathbb{Z}^{d}}(\Delta-\Delta_{\mu}^{\rw})(x)=0.
	}
(This choice is closely related to the choice of constants $\lambda, \mu$ in \cite[(2.29)]{HHS03}.)
To use Lemma \ref{lem:rhoG} we need to justify why $\mu\in[-\frac 1{4d},\frac{1}{2d}]$. Since
$\sum_x\Delta_\mu^{\rw}(x)=1-2d\mu$, we get
	\[
	\mu=\frac{1}{2d}\Big(1-\sum_{x\in\Z^d}\Delta(x)\Big).
	\]
The upper bound
$\mu\le\frac{1}{2d}$ is automatic since $\sum_x \Delta(x)\ge 0$. For the
lower bound, we need to show that $\sum_x\Delta(x)\le \nicefrac 32$. This follows because $\sum_x \Delta^{\rw}_{\lambda'}(x)\le 1$ with $\lambda'$ chosen as in Lemma \ref{lem:lace}(3), and $\sum_x |\Delta_\lambda(x)-\Delta^{\rw}_{\lambda'}(x)|\leq C\beta$ again by Lemma \ref{lem:lace}(3), so for
$\beta$ sufficiently small we will have $\mu\ge -\frac 1{4d}$, as needed.

Next we note that 
	\begin{equation}\label{eq:munotlambda}
	\sum |\Delta(x)-\Delta^{\rw}_\mu(x)|\le C_2\beta|x|^{-d-4}.
	\end{equation}
Indeed, at every $x$ that is not a neighbour of $0$ this is an immediate corollary from our condition (3) of Lemma \ref{lem:lace}. For the neighbours, we note that 
	\eqn{
	\label{lambda'-mu-dif}
	|\lambda'-\mu|
	=\frac{1}{2d}\Big|\sum_{x\in\Z^d}(\Delta^{\rw}_{\lambda'}-\Delta^{\rw}_\mu)(x)\Big|
	\stackrel{\textrm{(\ref{mu-choice})}}{=}
	\frac{1}{2d}\Big|\sum_{x\in\Z^d}(\Delta^{\rw}_{\lambda'}-\Delta)(x)\Big|
	\le \frac{C\beta}{2d},
	}
where the last inequality is again from condition (3) of Lemma
\ref{lem:lace}. Thus, for $x$ a neighbour of the origin, we conclude that 
	\[
	\Delta(x)-\Delta^{\rw}_\mu(x)=\mu-\lambda'+\Delta^{\saw}_{\lambda}(x)-\Delta^{\rw}_{\lambda'}(x)
	=O(\beta),
	\]
by \eqref{lambda'-mu-dif} and condition (3) of Lemma
\ref{lem:lace}. This shows \eqref{eq:munotlambda}.

We next define 
	\[
	\rho=\frac{1}{C_{2}\beta}(\Delta-\Delta_{\mu}^{\rw}),
	\]
with $C_{2}$ being the constant from \eqref{eq:munotlambda}.
This $\rho$ has the required properties, so that, by Lemma \ref{lem:rhoG},
	\[
	\|\rho*G_{\mu}^{\rw}\|\le C.
	\]
In turn, this implies
	\[
	\|(\Delta-\Delta_{\mu}^{\rw})*G_{\mu}^{\rw}\|\le C\beta.
	\]
But this is exactly $\Delta*G_{\mu}^{\rw}-\delta_{0}$. This means
that $\Delta*G_{\mu}^{\rw}$ is invertible if $\beta$ is sufficiently
small (recall (\ref{eq:inversion})), and further that we have $(\Delta*G_{\mu}^{\rw})^{-1}=\delta_{0}+E$
with $\|E\|\le C\beta$. Our required function is now
	\[
	G=(\Delta*G_{\mu}^{\rw})^{-1}*G_{\mu}^{\rw},
	\]
which is clearly an inverse for $\Delta$. To see that $G(x)\le 2G^{\rw}(x)$
write
	\eqn{
	\label{G-E-rewrite}
	G=G_{\mu}^{\rw}+E*G_{\mu}^{\rw}.
	}
Since $G_{\mu}^{\rw}(x)\le G^{\rw}(x)$, because $\mu\leq 1/(2d)$, we need only estimate $E*G_{\mu}^{\rw}$.
We write
	\eqn{
	(E*G_{\mu}^{\rw})(x)=\sum_{y}G_{\mu}^{\rw}(y)E(x-y)
	=\sum_{|y|<\frac{1}{2}|x|}G_{\mu}^{\rw}(y)E(x-y)
	+\sum_{|y|\ge\frac{1}{2}|x|}G_{\mu}^{\rw}(y)E(x-y)=I+II.
	}
For $I$ we use that $|y|<\frac{1}{2}|x|$ implies that $|x-y|\ge\frac{1}{2}|x|$
so $|E(x-y)|\le C\beta|x|^{-d}$ and hence
	\eqn{
	|I|\le C\beta|x|^{-d}\sum_{|y|<\frac{1}{2}|x|}|G_{\mu}^{\rw}(y)|
	\le C\beta|x|^{-d}\sum_{|y|<\frac{1}{2}|x|}|y|^{2-d}\le C\beta|x|^{2-d}.
	}
For $II$ we have
	\eqn{
	\label{E-last-bd}
	|II|\le\Big(\max_{|y|\ge\frac{1}{2}|x|}|G_{\mu}^{\rw}(y)|\Big)\cdot\sum_{y}|E(x-y)|
	\le C\left(\tfrac{1}{2}|x|\right)^{2-d}\cdot C\beta=C\beta|x|^{2-d}.
	}
We get that $|(E*G_{\mu}^{\rw})|\le C\beta|x|^{2-d}$, which means
that for $\beta$ sufficiently small, it is less than $G^{\rw}$.
This shows that $G(x)\le 2G^{\rw}(x)$, and thus completes the proof of Lemma \ref{lem:solve}.
\end{proof}

\subsection*{Remarks}
\label{page:rmrks}
\lazyenum
\item
Examining \eqref{G-E-rewrite}--\eqref{E-last-bd} in the proof of the last lemma shows that in fact
  we got that
	\[
	G^{\saw}_\lambda(x)=G^{\rw}_\mu(x)+O(\beta|x|^{2-d}).
	\]
Together with \eqref{Edgeworth-Grw}, this would prove \eqref{Green-asymp}, if only we could show that 
$\mu=\frac 1{2d}$ for $\lambda_c$, the critical $\lambda$. This is a classical fact, let us
sketch its proof for the convenience of the reader. Since
$\mu=\frac{1}{2d}\big(1-\sum_x\Delta(x)\big)$, it is equivalent
to showing that $\chi(\lambda)\to\infty$ as $\lambda\nearrow\lambda_c$
where $\chi(\lambda)=\sum_xG^{\saw}_\lambda(x)$ (recall (\ref{eq:muchi})). 
Let $c_n(x)=\smash{\sum\limits_{\substack{\gamma \colon 0\to x\\ \len(\gamma)=n}}}W^{\beta}(\gamma)$. Then, $c_n$ is submultiplicative in the sense that
	\[
	nc_n(x)\leq 2d\sum_{m=0}^{n-1} (c_m*c_{n-1-m})(x).
	\]
Entering this inequality into the definition of $G^{\saw}$ gives $\frac{\partial \chi(\lambda)}{\partial \lambda}\le 2d\chi(\lambda)^2.$
This shows that any point where $\chi(\lambda)<\infty$ must be
strictly subcritical, showing that $\chi(\lambda_c)=\infty$, as
needed. (The reader who finds this sketch too dense may see more
details in, say, \cite[Theorem 2.3]{S04}).

\item The result of the theorem is known as an ``infrared bound''. It implies the finiteness of the so-called \emph{bubble diagram}, which in turn implies various critical exponents. See again \cite[Theorem 2.3]{S04}.

\item Let us remark on the exponent $-d-4$ appearing in the inequality
$|\Delta^{\rw}-\Delta^{\saw}|\le C\beta|x|^{-d-4}$ of Lemma
\ref{lem:lace}. On the one hand, Lemma \ref{lem:lace} in fact gives a stronger bound with exponent $-3(2-d)$, see \eqref{eq:Pi}.
On the other hand, most of the proof of Lemma \ref{lem:solve} actually needs less, $|x|^{-d-2-\varepsilon}$ would have been enough. The
only place where the stronger estimate $|x|^{-d-4}$ is used is in
Lemma \ref{lem:rhoG}, in order to justify the associativity of the
convolution in the expression $\rho *
G^{\rw}*\Delta^{\rw}*G^{\rw}_\mu$. There are certainly ways to justify
associativity at that point under the weaker assumption $|\rho(x)|\le
|x|^{-d-2-\varepsilon}$, but an additional argument would be needed.
\end{enumerate}

\paragraph{{\bf Acknowledgements.}}
The work of RvdH is supported by the Netherlands
Organisation for Scientific Research (NWO) through VICI grant 639.033.806 
and the Gravitation {\sc Networks} grant 024.002.003.
The work of EB is supported by SNF grant
\verb!200020_138141!. GK is supported by the Israel Science Foundation
and the Jesselson Foundation, and by the CNRS during his visit to the
Institut Heni Poincar\'e.
This work was performed in part during a visit of RvdH to the Weizmann Institute, 
and when the authors met in Eurandom, Oberwolfach and the Institut Henri Poincar\'e. 
We thank these institutions for their hospitality.

\end{document}